\newtheorem{thm}{Theorem}
\newtheorem{prop}{Proposition}
\title{Analytical studies of a time-fractional porous medium equation. Derivation, approximation and applications}
\author{\L ukasz P\l ociniczak\thanks{Institute of Mathematics and Computer Science, Wroclaw University of Technology, ul. Janiszewskiego 14a, 50-372 Wroclaw, Poland, e-mail: \underline{lukasz.plociniczak@pwr.edu.pl}}}
\date{}
\begin{document}
\maketitle

\begin{abstract}
In this paper we investigate the porous medium equation with a fractional temporal derivative. We justify that the resulting equation emerges when we consider the waiting-time (or trapping) phenomenon that can happen in the medium. Our deterministic derivation is dual to the stochastic CTRW framework and can include nonlinear effects. With the use of the previously developed method we approximate the investigated equation along with a constant flux boundary conditions and obtain a very accurate solution. Moreover, we generalize the approximation method and provide explicit formulas which can be readily used in applications. The subdiffusive anomalies in some porous media such as construction materials have been recently verified by experiment. Our simple approximate solution of the time-fractional porous medium equation fits accurately a sample data which comes from one of these experiments. 
\end{abstract}

\section{Introduction}
Throughout the last decade the number of experimental reports of the anomalous diffusive phenomena has grown. For example, some of these concern moisture dispersion in building materials (see for ex. \cite{Gha04,Gha12,Ram08,Tay99,Kun01}) but also some other minerals like zeolite \cite{Aze06,Aze06a}. When we consider an essentially one dimensional (other dimensions can be neglected) porous medium with one boundary kept at constant moisture, the concentration in subsequent time instant $t$ and space point $x$ is a function of $x/\sqrt{t}$. This characteristic space-time scaling is not, however, present in every type of porous media. As experiments showed, in certain materials the moisture propagates according to the $x/t^{\alpha/2}$ scaling, where $0<\alpha<2$. When $0<\alpha<1$ the moisture concentration represents a subdiffusive character, meaning that the fluid particles can be trapped in some regions for a significant amount of time. In the superdiffusive case $1<\alpha<2$ there is a possibility that water can be transported for large distances in a relatively short time (jump behavior). All of these features are certainly associated with the geometrical aspects of the medium such as pore distribution or heterogeneous features such as highly conductive channels and macropores \cite{Pac03}. Some Authors state that the anomalous behavior can also be caused by chemical reactions that casue the diffusivity to change during the imbibition \cite{Loc06}.

There have been many attempts to model the anomalous diffusion in porous media such as building materials. For example in \cite{Kun01} a model based on a nonlinear Fickian Law has been proposed. The most popular approach, however, was made by the use of the fractional derivatives \cite{Hil06,Ger10,Pac03,Sun13}. These generalizations of the classical derivative operator are necessary nonlocal. This gives an opportunity to model the history (nonlocality in time) or long-range influence in medium (nonlocality in space). The theoretical foundation that the transport in porous media should include the nonlocal phenomena was developed in \cite{Hu94,Cush93,Cush94}, where Authors devised the nonlocal Darcy's Law from the Boltzmann transport equation and argumented that the nonlocality can be a consequence of the heterogeneous pore distribution. The fractional derivative approach has an advantage to offer a theoretical framework in which all the anomalous behaviors can be governed by the derivative order. The disadvantage, however, is that the fractional equations are always more difficult to solve explicitly (either analytically or numerically) especially in the nonlinear case. 

In this work we generalize our previous results concerning the time-fractional porous medium equation, which is used to model anomalous moisture dispersion. The initial results were published in \cite{Plo13} and further developed in \cite{Plo14}. In what follows we present a theoretical derivation of the time-fractional anomalous diffusion equation and provide an analysis of its approximate solutions. The boundary conditions, that we supplement, are of the two types: constant concentration or constant flux at the interface. Both of these have a direct physical interpretation. We present our approximation method and show some estimates on the error terms. All of our theoretical considerations are illustrated by numerical analysis and fitting with real experimental data. Both of these verify that our approximations are sensibly accurate when describing anomalous diffusion in porous media. Moreover, a simple and closed form of the formulas we derive is an additional advantage which can quicken the data fitting process in applications. The usual way of data fitting is being done with a numerical solution of the nonlinear anomalous diffusion equation. Due to the nonlocality of the fractional operator, the computational power needed for the nonlinear finite difference method is very large. Simple (but approximate) solutions that we find can be a remedy for these computational problems. 

\section{Model of the time-fractional anomalous diffusion}
\subsection{Formulation}
Consider a porous medium in which the Darcy's Law holds \cite{Bear}
\begin{equation}
	q=-\frac{\rho k}{\mu} \nabla p,
\end{equation}
where $q$ is the flux of the fluid with viscosity $\mu$ flowing under the pressure $p$ through the medium of permeability $k$. In any porous medium not all of its space can be filled with the flow. The \emph{concentration} $u=u(x,t)$ is defined to be density of the water inside the Representative Elementary Volume (REV) at position $x$ and time instant $t$. The total fraction of space available for the fluid is called the \emph{porosity} of the medium. Under suitable assumptions \cite{Szym} the pressure $p$ can be expressed as a monotone function of the concentration $u$, so we can write
\begin{equation}
	q=-\frac{\rho k}{\mu} \frac{dp}{du} \; \nabla u =: -D(u) \nabla u,
\label{Darcy}
\end{equation}
where $D$ denotes the diffusivity, which can depend on the moisture concentration $u$. In applications this is almost always the case \cite{Szym} - the diffusivity can change even a several orders of magnitude with the change of $u$. In any region of the porous medium the amount of fluid must be conserved and thus the moisture must obey the continuity equation
\begin{equation}
	u_t + \nabla\cdot q=0,
	\label{conti}
\end{equation}
where subscript denotes the derivative with respect to time variable $t$. Above equation along with (\ref{Darcy}) gives us the governing nonlinear diffusion equation known in the hydrology as the \emph{Richards equation} \cite{Rich}
\begin{equation}
	u_t = \nabla \cdot \left(D(u) \nabla u\right),
	\label{Richards}
\end{equation}
If we included the gravity we would obtain an additional convective term. But in our model it is sufficient to consider only the diffusive phenomena. For our purposes it is also sufficient to reduce the equation to one spatial dimension since the diffusion in the experiment we want to model progresses mostly in one direction. In this work we will consider two different initial-boundary conditions for (\ref{Richards}), namely
\begin{equation}
	u(0,t)=C, \quad u(x,0)=0, \quad x>0, \; t>0,
	\label{cond1}
\end{equation}
which models a constant concentration at the face of an initially dry sample and 
\begin{equation}
	-D\left(u(0,t)\right)u_x(0,t)=Q, \quad u(x,0)=0, \quad x>0, \; t>0,
	\label{cond2}
\end{equation}
which is a mathematical description of constant flux at the origin.

As recent investigations show, equation (\ref{Richards}) is not sufficient to describe the moisture distribution in some porous media, in particular construction materials. Namely, the characteristic space-time scaling of some processes is very different from the solution of the classical diffusion equation. By a quick calculation we can check that a solution of (\ref{Richards}) with conditions (\ref{cond1}) is a function of $x/\sqrt{t}$ only. In this case (\ref{Richards}) reduces to an ordinary differential equation. The experiments from \cite{Gha04,Ram08} show that the solution of the equation, which is supposed to describe the (anomalous) diffusion, scales as $x/t^{\alpha/2}$ for $\alpha\in(0,2)$.

To model the anomalous behavior of the characteristic space-time scaling we use the fractional partial differential equation \cite{Plo13,Plo14,Aze06,Hil06}
\begin{equation}
	^C \partial_t^\alpha u = \left( D(u) u_x \right)_x, \quad 0<\alpha<1,
\label{E}
\end{equation}
where $^C \partial_t^\alpha$ is the Caputo partial derivative operator defined on a suitably chosen function space by (\cite{Pod99,Kil06})
\begin{equation}
	^C \partial_t^\alpha u(x,t)=\frac{1}{\Gamma(n-\alpha)}\int_0^t (t-s)^{n-\alpha-1} \frac{\partial^n u}{\partial t^n} (x,s)ds,
	\label{Caputo}
\end{equation}
where $n=[\alpha]+1$ is the first integer greater than $\alpha$. This equation describes the subdiffusive character of the porous medium. In that case the fluid particles can be trapped in some regions for a prolonged periods of time. This in consequence leads to the memory effect of the process. At some particular time, not only the fluid driven by the flow can cross the REV but also some of its portions that have just been freed from a longer wait periods. This prolonged waiting times can be consequences of many physical phenomena that can happen in the porous medium.

It is important to make the following remark. It is a well known fact that with vanishing initial conditions, that is for $\partial^k u/\partial t^k(x,0)=0, \; k=0,...,[\alpha]$, the Caputo fractional derivative (\ref{Caputo}) coincides with the Riemann-Liouville version (see \cite{Pod99} and \cite{Gor00} for the self-similar case) defined by the formula
\begin{equation}
	\partial_t^\alpha u(x,t)=\frac{1}{\Gamma(n-\alpha)}\frac{\partial^n}{\partial t^n}\int_0^t (t-s)^{n-\alpha-1} u(x,s)ds.
\end{equation}
where $n=[\alpha]+1$ is the first integer greater than $\alpha$. This operator requires less regularity on the differentiated function and thus is more suited to be treated mathematically. The difference between the Caputo and Riemann-Liouville fractional derivatives can be easily seen by defining the fractional integral \cite{Pod99}
\begin{equation}
	I_t^{\alpha}u(x,t)=\frac{1}{\Gamma(\alpha)}\int_0^t (t-s)^{\alpha-1} u(x,s)ds, \quad {\alpha>0}.
\end{equation}
Then we have the representation $\partial_t^\alpha = \partial_t I_t^{1-\alpha}$ and $^C \partial_t^\alpha =  I_t^{1-\alpha} \partial_t$, that is they differ by the order of differentiation operator. It is also worth to mention the composition formula for the functions with vanishing initial conditions
\begin{equation}
	I^{\alpha}\partial_t^\alpha u(x,t)=u(x,t) \quad \text{if} \quad \frac{\partial^k u}{\partial t^k}(x,0)=0, \; k=0,...,[\alpha].
	\label{compo}
\end{equation}
With some subtleties, the fractional derivative operators enjoy many other identities that are similar to the integer order derivatives. For a thorough and deep analysis of many topics concerning these matters see \cite{Kil06}. 

The initial values of $u$ in (\ref{cond1})-(\ref{cond2}) are zero for our case $0<\alpha<1$. Therefore in our analysis we will consider the following equation for the time-fractional nonlinear diffusion equation
\begin{equation}
	\partial_t^\alpha u = \left( D(u) u_x \right)_x, \quad x>0, \; t>0, \quad 0<\alpha<1,
\label{ERL}
\end{equation}
We choose the particular form of the diffusivity $D$ to be proportional to the some power of the concentration, that is $D(u)=D_0 u^m$. This is a well-known choice which accurately describes many different porous media. In \cite{Aze06} Authors inverted (\ref{ERL}) and derived a formula for diffusivity. After calculations they obtained a profile similar to our power-type choice of diffusivity function. To simplify the final form of the investigated equation we introduce the proper scales
\begin{equation}
	x^*=\frac{x}{L}, \quad t^*=\frac{t}{T}, \quad u^*=\frac{u}{C}, \quad T=\left(\frac{L^2}{D_0 C^m}\right)^{\frac{1}{\alpha}},
	\label{scaling1}
\end{equation}
in the case of (\ref{cond1}) and 
\begin{equation}
	x^*=\frac{x}{L}, \quad t^*=\frac{t}{T}, \quad u^*=\frac{u}{(LQ)^{1/(m+1)}}, \quad T=\left(\frac{L^2}{D_0 (LQ)^{m/(m+1)}}\right)^{\frac{1}{\alpha}},
	\label{scaling2}
\end{equation}
for (\ref{cond2}). Then, the equation describing the time-fractional nonlinear diffusion takes the form
\begin{equation}
	\partial_t^\alpha u = \left( u^m u_x \right)_x, \quad x>0, \; t>0, \quad 0<\alpha<1,
\label{ERL2}
\end{equation}
where we dropped the asterisks for the clarity of presentation. The initial-boundary conditions are now
\begin{equation}
	u(0,t)=1, \quad u(x,0)=0, \quad x>0, \; t>0,
\label{cond3}
\end{equation}
or
\begin{equation}
	-u(0,t)^m u_x(0,t)=1, \quad u(x,0)=0, \quad x>0, \; t>0,
	\label{cond4}
\end{equation}
Equation (\ref{ERL2}) can be called the time-fractional porous medium equation. For the mathematical analysis of its space-fractional version the Reader is referred to \cite{Bil13,Pab11}. 

\subsection{Derivation}
Equation (\ref{E}) modeling subdiffusion, is almost always derived by stochastic analysis in a Continuous Time Random Walk (CTRW) framework \cite{Met00}. It is instructive to see how this equation can be devised deterministically without any assumptions on the probability distributions of waiting times. 

Once again consider a fluid in the porous medium which flow is governed by the continuity equation (\ref{conti}). Assume that due to certain physical phenomena fluid particles can be trapped in some regions for a period of time $s$. Due to this trapping, only a portion of the instantaneous flux at time $t$ should be taken into account when calculating the change of concentration. The rest consists of the particles that have just been released. In that case the continuity equation can be written as a delayed equation with weights some $w_0$ and $w_1$
\begin{equation}
	u_t=-\left(w_0\nabla \cdot q(x,t)+w_1\nabla \cdot q(x,t-s)\right).
\end{equation}
At instant $t$ only a fraction of the flux comes through the REV, the rest if trapped for $s$. If such trapping can happen for a several periods $s_1,...,s_n$ we then have
\begin{equation}
	u_t=-\sum_{i=1}^n w_i \; \nabla\cdot q(x,t-s_i).
\end{equation}
By introducing the weight density $w=w(s)$ it is straightforward to generalize trapping phenomena to a continuous distribution of periods. By definition we have $w_i=w(s_i)\Delta s_i$, where $\Delta s_i=s_i-s_{i-1}$ and thus
\begin{equation}
	u_t=-\sum_{i=1}^n w(s_i) \; \nabla\cdot q(x,t-s_i) \Delta s_i \longrightarrow -\int_0^t w(s) \; \nabla\cdot q(x,t-s) ds.
\end{equation}
By a change of the variable we arrive at a non-local integro-differential equation in which any past value of the flux is taken into account
\begin{equation}
	u_t=-\int_0^t w(t-s) \; \nabla\cdot q(x,s) ds.
	\label{nonlocal}
\end{equation}
This accounts for the fluid particles that can be trapped for any period of time. The amount of flux of the particles that wait for the time $s$ is equal to $w(s)$. In the sequel we will determine the form of the (generalized) function $w$. To this end we make the following claims. 
\begin{enumerate}
	\item (\textit{Generalization}) In the case of the classical diffusion, that is without the waiting times, (\ref{nonlocal}) should reduce to (\ref{conti}).
	\item (\textit{Simplicity}) The form of $w$ should be as simple as possible.
\end{enumerate}
Because we assume that the resulting equation should possess a self-similar solutions dependent on $\alpha$, let us denote this explicitly by writing $w_\alpha$ instead of $w$. By the first claim we should have $w_\alpha\rightarrow \delta$, when (say) $\alpha\rightarrow 1$. Here $\delta$ is the Dirac delta distribution. The limit should be understood in the appropriate weak sense in the distribution topology. It is easy to see that when $w=\delta$ we immediately retrieve the classical local equation (\ref{conti}). As $\alpha\rightarrow 1$ the particles are being trapped for shorter and shorter instants eliminating the non-local memory effect. Since $w_\alpha$ should be treated as a distribution we will make a slight change of the kernel in (\ref{nonlocal}) in order to treat only ordinary functions. The simplest approximation of the $\delta$ distribution is the following 
\begin{equation}
	w_1=\delta=\lim_{h\rightarrow 0} \frac{\chi_{[0,h]},}{h},
\end{equation}
where the limit is understood in the weak sense, that is
\begin{equation}
\begin{split}
	 \nabla\cdot q(x,t) =& \int_0^t w_1(t-s) \nabla\cdot q(x,s) ds = \int_0^t \delta(t-s) \nabla\cdot q(x,s) ds \\ 
	=&\lim_{h\rightarrow 0} \int_{t-h}^t \nabla\cdot q(x,s) ds=\frac{\partial}{\partial t} \int_0^t \nabla\cdot q(x,s) ds.
\end{split}
\end{equation}
The above equation is just the elementary formula for differentiating an integral as a function of the upper limit of integration. This motivates us to rewrite the equation (\ref{nonlocal}) as 
\begin{equation}
	u_t=-\frac{\partial}{\partial t}\int_0^t k_\alpha(t-s) \; \nabla\cdot q(x,s) ds,
	\label{nonlocal2}
\end{equation}
where $k_\alpha$ is now a \emph{function} with $k_1\equiv 1$. The limit of $\delta$ distribution is now hidden under the derivative operator. It is easy to make a sensible educated guess on the form of $k_\alpha$: by the simplicity claim we suppose that it should be a power function $k_\alpha(s)=C_\alpha s^{\alpha-1}$, where the constant $C_\alpha$ has to be determined. We see that if we choose $C_\alpha=1/\Gamma(\alpha)$ we will get
\begin{equation}
	u_t=-\frac{1}{\Gamma(\alpha)}\frac{\partial}{\partial t}\int_0^t (t-s)^{\alpha-1} \; \nabla\cdot q(x,s) ds=-\partial_t^{\alpha-1} \nabla\cdot q,
	\label{nonlocal3}
\end{equation}
where we identified the Riemann-Liouville fractional derivative of order $1-\alpha$. Different choice of $C_\alpha$ would yield a constant multiple of the fractional derivative and would not modify the quantitative features of the equation. Thus our choice of $C_\alpha$ is the simplest. Now, by the composition identity (\ref{compo}) we can operate with $I_t^{1-\alpha}$ on both sides of (\ref{nonlocal3}) and get
\begin{equation}
	^C \partial_t^{\alpha} u=-\nabla\cdot q,
	\label{nonlocal4}
\end{equation}
which is the same as (\ref{E}) with flux from the Darcy's Law (\ref{Darcy}). This concludes the derivation. We see that the emergence of the fractional derivative is very natural provided that the fluid particles can be trapped in some regions in space for a period of time with a power-type weight. 

\section{Analytical results}
\subsection{Self-similar setting}
In this paper we are mainly concerned with the self-similar solutions of (\ref{ERL2})
\begin{equation}
	u(x,t)=t^a U(\eta), \quad \eta=\frac{x}{t^b},
	\label{scale}
\end{equation}
where $a$ and $b$ have to be determined. The fractional derivative transforms as
\begin{equation}
\begin{split}
	\partial^\alpha_t u(x,t)&=\frac{\partial}{\partial t} \left( I^{1-\alpha}_t \left(t^a U(x t^{-b})\right) \right)=\frac{1}{\Gamma(1-\alpha)}\frac{\partial}{\partial t} \int_0^t (t-z)^{-\alpha} z^a U(x z^{-b})dz\\
	&=\frac{1}{\Gamma(1-\alpha)}\frac{\partial}{\partial t} \left(t^{a-\alpha+1} \int_0^1 (1-s)^{-\alpha} s^a U(\eta s^{-b})ds\right).
\end{split}
\label{trans1}
\end{equation}
If we introduce the Erd\'{e}lyi-Kober fractional integral operator (see \cite{Kir93})
\begin{equation}
	I^{\beta,\gamma}_\delta U(\eta) = \frac{1}{\Gamma(\gamma)}\int_0^1 (1-z)^{\gamma-1} z^\beta U(\eta z^\frac{1}{\delta}) dz,
	\label{EK}
\end{equation}
then equation (\ref{trans1}) can be restated as
\begin{equation}
	\partial^\alpha_t u(x,t)=\frac{\partial}{\partial t} \left(t^{a-\alpha+1} I^{a, 1-\alpha}_{-\frac{1}{b}} U(\eta)\right) = t^{a-\alpha}\left[(a-\alpha+1)-b\eta \frac{d}{d\eta}\right]I^{a, 1-\alpha}_{-\frac{1}{b}} U(\eta),
\label{trans15}
\end{equation}
where we used the chain-rule for (\ref{scale})
\begin{equation}
	\frac{\partial}{\partial t} = \frac{\partial\eta}{\partial t}  \frac{d}{d\eta}=-b\eta t^{-1}  \frac{d}{d\eta}.
\end{equation}
In our self-similar setting, the spatial derivative term in (\ref{ERL2}) changes into
\begin{equation}
	\left( u^m(x,t) u_x(x,t) \right)_x = t^{a(m+1)-2b} \frac{d}{d\eta}\left(U^m(\eta) \frac{d}{d\eta}U(\eta)\right).
\label{trans4}
\end{equation}
Now, by equating (\ref{trans15}) with (\ref{trans4}) we eliminate the explicit time dependence if and only if
\begin{equation}
	2b-m a = \alpha.
\label{war1}
\end{equation}
The second equation for the unknown constants $a$ and $b$ is to be determined from the conditions (\ref{cond3}) or (\ref{cond4}). In the case of (\ref{cond3}) we have $1=u(0,t)=t^a U(0)$ and thus $a=0$. From (\ref{war1}) we immediately get $b=\alpha/2$. The other condition now has the form $0=u(x,0)=U(\infty)$. In the classical versions of the problems we are considering its can be shown that the solution always possess a \emph{compact support}. That is, there exists $\eta^*$ such that $U(\eta)=0$ for every $\eta\geq \eta^*$ \cite{Gil76}. This feature is also very appealing from the physical point of view indicating the finite speed of propagation. The derivative at $\eta^*$ should be treated in a weak sense. In what follows we will seek for solutions with compact support only. Considering the (\ref{cond4}) we have $-1=u(0,t)^m u_x(0,t)=t^{a(m+1)-b}U(0)^m U'(0)$ which forces $b=a(m+1)$ what along with (\ref{war1}) yields $a=\alpha/(m+2)$ and $b=\alpha (m+1)/(m+2)$. Moreover, the condition $u(0,x)=0$ is satisfied automatically. 

To sum up, our self-similar problem consists of the following equations
\begin{equation}
	\left(U^m U'\right)' = \left[(a-\alpha+1)-b\eta \frac{d}{d\eta}\right]I^{a, 1-\alpha}_{-\frac{1}{b}} U, \quad 0<\alpha< 1,
\label{mainEq}
\end{equation} 
along with two sets of boundary conditions 
\begin{equation}
	U(0)=1, \quad U(\eta)=0 \quad \text{for} \quad \eta\geq\eta^*>0 \quad \text{and} \quad a=0, \quad b=\frac{\alpha}{2},
\label{conds1}
\end{equation}
or
\begin{equation}
	-U(0)^m U'(0)=1, \quad U(\eta)=0 \quad \text{for} \quad \eta\geq\eta^*>0 \quad \text{and} \quad a=\frac{\alpha}{m+2}, \quad b=\frac{m+1}{m+2}\;\alpha,
\label{conds2}
\end{equation}
for some $\eta^*$ which defines the compact support of and has to be determined as a part of the solution. Notice also that for every $\alpha\in(0,1]$ the formula (\ref{mainEq}) represents a \emph{second order} integro-differential equation. This along with the two conditions in each set (\ref{conds1})-(\ref{conds2}) provides a well-posed nonlinear problem. The solution of these equations can be obtained in the closed form only in the linear case, i.e. if $m=0$ \cite{Gor00}. For $m>0$ even in the classical setting $\alpha=1$, only approximate solutions are known \cite{Lis80,Kin88}. In what follows we will convert (\ref{mainEq}) to an approximate differential equation and then solve it. 

\subsection{Approximation}
The structure of the equation (\ref{mainEq}) is very complex. It combines both local and nonlocal behaviors of the function $U$ which prevents it from being easily solved and analyzed analytically. In order to obtain some information about the behavior of $U$ we embrace a technique developed in \cite{Plo13} and later improved in \cite{Plo14}. It has its foundation on asymptotic techniques such as perturbation theory and Laplace method for asymptotic integrals. The strategy is to approximate the Erd\'{e}lyi-Kober fractional integral (\ref{EK}) by a derivative operator and then to tackle the resulting ordinary differential equation, which resembles the one considered in the classical case. Since the classical theory for a porous medium equation is now mostly developed we retranslate our problem into the similar one which lies on the firm ground. 

The main approximation technique that we will utilize is based on the following theorem proved in \cite{Plo14}. These results state that for sufficiently decent functions the Erd\'{e}lyi-Kober operator can be restated as a series of derivatives. A more thorough and applicable research was conducted in \cite{Poo12,Luk14}, where Authors elaborated on the accuracy of approximation and presented many applications.

\begin{thm}
\label{Tw1}
	Let $U$ be an analytic function and $a>-1$, $b>0$, $c>0$. Then the following representation holds
	\begin{equation}
		I^{\beta,\gamma}_\delta U(\eta)=\sum_{k=0}^\infty \lambda_k U^{(k)}(\eta) \frac{\eta^k}{k!},
		\label{appr}
	\end{equation}
	where
	\begin{equation}
		\lambda_k=\frac{(-1)^k}{\Gamma(\gamma)} \int_0^1 (1-s)^{\gamma-1} s^\beta (1-s^{1/\delta})^k ds=\sum_{j=0}^{k}\binom{k}{j}(-1)^{k-j} \frac{\Gamma\left(\beta+\frac{j}{\delta}+1\right)}{\Gamma\left(\beta+\gamma+\frac{j}{\delta}+1\right)}.
		\label{lamb}
	\end{equation}
	Moreover, we have the asymptotic relation when $k\rightarrow\infty$
	\begin{equation}
		\lambda_k \sim(-1)^k \frac{\Gamma\left(\delta(\beta+1)\right)}{\Gamma(\gamma)} \frac{\delta}{k^{\delta(\beta+1)}}.
		\label{lambas}
	\end{equation}
\end{thm}

In order to approximate the E-K operator we can terminate the series (\ref{appr}) at some fixed $N$. This is also useful if the function $U$ is not analytic. The approximation should still be valid. The proof of the Theorem \ref{appr} uses the argument similar to the proof of Watson's lemma associated with Laplace method for asymptotic integrals. Since the integrand (\ref{EK}) is singular near $z=1$ we can expect that the mass of whole integral is located near this point. Hence, it is reasonable to substitute $U(\eta)$ instead of $U(\eta z^{1/\delta})$. Then, we would obtain the zero-order term in (\ref{appr}). The whole decomposition comes from the expansion into Taylor series near $z=1$.  

We can obtain some qualitative estimates on the rate of convergence of (\ref{appr}). First, notice that $(-1)^k \lambda_k$ are strictly decreasing. It follows from the positivity of the integral in (\ref{lamb}) and the fact that $(1-z^{1/c})^k$ is decreasing with $k$. By the formula (\ref{lambas}) we see that $\lambda_k$ converge to zero with a rate of a power type (larger $\beta$ and $\delta$ give faster convergence). Moreover, by some elementary calculation we can deduce the following result.

\begin{prop}
The following statement holds
\begin{equation}
	\lambda_k \stackrel{\gamma\rightarrow 0^+}\longrightarrow \left\{
	\begin{array}{lc}
		1, & k=0;\\
		0, & k>0,
	\end{array}
	\right.
	\quad \text{for} \quad \beta>-1, \; \delta>0.
	\label{lambb}
\end{equation}
\end{prop}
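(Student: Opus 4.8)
The plan is to work directly from the closed-form expression for $\lambda_k$ on the right-hand side of (\ref{lamb}) and to pass to the limit termwise. This finite sum over $j$ is convenient precisely because the singular prefactor $1/\Gamma(\gamma)$ from the integral representation has already been cancelled against the Beta integral, leaving quotients of Gamma functions that stay regular as $\gamma\to 0^+$. Because the sum has only $k+1$ terms, no uniform-convergence machinery is needed: the limit of the sum is the sum of the limits, and the sole point to verify before interchanging is that each individual term converges.

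For each fixed $j$ with $0\le j\le k$ the term carries the factor
\begin{equation}
	\frac{\Gamma\left(\beta+\frac{j}{\delta}+1\right)}{\Gamma\left(\beta+\gamma+\frac{j}{\delta}+1\right)}.
\end{equation}
Since $\beta>-1$ and $\delta>0$, the argument $\beta+\frac{j}{\delta}+1$ is strictly positive for every $j\ge 0$, so $\Gamma$ is continuous and nonvanishing there; hence each quotient tends to $1$ as $\gamma\to 0^+$. Passing to the limit inside the finite sum therefore yields
\begin{equation}
	\lim_{\gamma\to 0^+}\lambda_k=\sum_{j=0}^{k}\binom{k}{j}(-1)^{k-j}=(1+(-1))^k=0^k,
\end{equation}
where the middle equality is the binomial theorem. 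Reading $0^k$ with the convention $0^0=1$ gives exactly (\ref{lambb}): the value $1$ when $k=0$ and $0$ when $k>0$. The borderline case $k=0$ is in fact independent of any convention, since there the sum reduces to $\lambda_0=\Gamma(\beta+1)/\Gamma(\beta+1+\gamma)\to 1$ directly.

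As a conceptual cross-check, the same conclusion follows from the integral form in (\ref{lamb}). As $\gamma\to 0^+$ the weight $(1-s)^{\gamma-1}/\Gamma(\gamma)=\gamma(1-s)^{\gamma-1}/\Gamma(\gamma+1)$ becomes an approximate identity concentrating all its mass at the endpoint $s=1$, so the integral collapses to the evaluation of the smooth factor $s^\beta(1-s^{1/\delta})^k$ at $s=1$, which is $0$ for $k>0$ and $1$ for $k=0$. I expect the finite-sum route to be essentially obstacle-free, the only care needed being the positivity of the Gamma arguments that rules out poles; the integral route, though more in the spirit of the Laplace/Watson-type reasoning used elsewhere in the paper, would require rigorously justifying the approximate-delta limit and is thus the genuinely delicate alternative.
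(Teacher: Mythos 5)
Your proof is correct, and it takes a genuinely different route from the paper's. You work from the finite-sum representation in (\ref{lamb}), where each of the $k+1$ terms is a quotient $\Gamma\bigl(\beta+\tfrac{j}{\delta}+1\bigr)/\Gamma\bigl(\beta+\gamma+\tfrac{j}{\delta}+1\bigr)$ with strictly positive Gamma arguments (since $\beta>-1$, $\delta>0$), so continuity of $\Gamma$ gives termwise convergence to $1$ and the binomial theorem collapses the sum to $(1-1)^k$. The paper instead works with the integral representation directly: it splits into the cases $\beta>0$, $\beta=0$ and $-1<\beta<0$, integrates by parts to absorb the singular factor $(1-z)^{\gamma-1}/\Gamma(\gamma)$ into $(1-z)^{\gamma}/\Gamma(\gamma+1)$, and invokes dominated convergence (plus an incomplete Beta function computation in the last case). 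Your argument is shorter and avoids the case analysis entirely; the only thing it leans on is the second equality in (\ref{lamb}), which is part of the quoted Theorem~\ref{Tw1} and is in any case elementary (binomial expansion of $(1-s^{1/\delta})^k$ followed by term-by-term Beta integrals). The paper's integral route is heavier but self-contained and consonant with the Laplace-method intuition it develops; your heuristic remark about the weight $\gamma(1-s)^{\gamma-1}$ concentrating at $s=1$ is exactly the idea the paper makes rigorous. Either proof is acceptable; yours is the more economical.
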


\begin{proof}
To see this, first consider the case $k=0$. Using the definition of the Beta function and its representation in terms of the Gamma functions we obtain 
\begin{equation}
	\lambda_0 = \frac{1}{\Gamma(\gamma)}\text{B}(\beta+1,\gamma)= \frac{\Gamma(\beta+1)\Gamma(\gamma)}{\Gamma(\gamma)\Gamma(\beta+\gamma+1)}\stackrel{\gamma\rightarrow 0^+}\longrightarrow 1.
\end{equation}
Now, to investigate the case of $k>0$ we must consider different domains of $\beta$ separately. For $\beta>0$ by integrating by parts we obtain
\begin{equation}
\begin{split}
	\lambda_k&=\frac{(-1)^k}{\Gamma(\gamma)}\left(\left[-\frac{1}{\gamma}(1-z)^\gamma z^\beta (1-z^{1/\delta})^k\right] _0^1+\frac{1}{\gamma}\int_0^1 (1-z)^\gamma \frac{d}{dz}\left(z^\beta(1-z^{1/\delta})^k\right)dz\right) \\
	&=\frac{(-1)^k}{\Gamma(\gamma+1)}\int_0^1 (1-z)^\gamma \frac{d}{dz}\left(z^\beta(1-z^{1/\delta})^k\right)dz \stackrel{\gamma\rightarrow 0^+}\longrightarrow (-1)^k \left[z^\beta (1-z^{1/\delta})^k\right]_0^1=0,
\end{split}
\end{equation}
where we used the Dominated Convergence Theorem. If $\beta=0$ then by a similar calculation we have
\begin{equation}
	\lambda_k=\frac{(-1)^k}{\Gamma(\gamma+1)}\left(1+\int_0^1 (1-z)^\gamma \frac{d}{dz}\left((1-z^{1/\delta})^k\right)dz\right) \stackrel{\gamma\rightarrow 0^+}\longrightarrow(-1)^k\left(1+\left[(1-z^{1/\delta})^k\right]_0^1\right),
\end{equation}
which vanishes for $k>0$. Finally, when $-1<\beta<0$ the integration by parts yields the following result
\begin{equation}
	\lambda_k=-\frac{(-1)^k}{\Gamma(\gamma)}\int_0^1 \left(\int_0^z (1-t)^{\gamma-1} t^\beta dt\right) \frac{d}{dz}\left((1-z^{1/\delta})^k\right)dz.
\end{equation}
The inner integral is an incomplete Beta function B$(z;\beta+1,\gamma)$. To calculate the limit notice that
\begin{equation}
\begin{split}
	\frac{1}{\Gamma(\gamma)} & \text{B}(z;\beta+1,\gamma)=\frac{1}{\Gamma(\gamma)}\left(\text{B}(\beta+1,\gamma)-\int_z^1 (1-t)^{\gamma-1}t^\beta dt\right) = \frac{\Gamma(\beta+1)}{\Gamma(\beta+\gamma+1)}\\ 
	&+\frac{1}{\Gamma(\gamma+1)}(1-z)^\gamma z^\beta-\frac{1}{\Gamma(\gamma+1)}\int_z^1 (1-t)^\gamma \frac{d}{dz}\left(t^\beta\right)dt \stackrel{\gamma\rightarrow 0^+}\longrightarrow 1-z^\beta-\left[t^\beta\right]_z^1=0.
\end{split}
\end{equation}
\end{proof}
This result show that when $\gamma$ becomes closer to 0 only the $\lambda_0$ survives. Recalling our main equation (\ref{mainEq}) we see than in its case we have $\gamma=1-\alpha$. Thus, with $\alpha$ going to $1$ we reobtain the classical diffusion equation. This shows that if we approximated the E-K operator in (\ref{mainEq}) by the $\lambda_0$ term, we would be utilizing a similar technique to the first-order perturbation theory with respect to $\alpha$. The closer $\alpha$ to the integer values the better the approximation.   

Now we turn to the analysis of the approximation error. The difference between the E-K operator and the approximating series (\ref{appr}) is
\begin{equation}
	I^{\beta,\gamma}_\delta U(\eta) - \sum_{k=0}^{N-1} \lambda_k U^{(k)}(\eta)\frac{\eta^k}{k!}= \lambda_N U^{(N)}(\eta)\frac{\eta^N}{N!} + \cdots,
\label{errest1}
\end{equation}
which means that the convergence depends on the nature of $U$ (and its derivatives). The series should represent the E-K operator very well especially for small $\eta$ and $U^{(k)}$ bounded for $k\in\mathbb{N}$. By using the asymptotic form of $\lambda_k$ as in (\ref{lambas}) we have
\begin{equation}
	I^{\beta,\gamma}_\delta U(\eta) - \sum_{k=0}^{N-1} \lambda_k U^{(k)}(\eta)\frac{\eta^k}{k!}\sim(-1)^N \delta\frac{\Gamma\left(\delta(\beta+1)\right)}{\Gamma(\gamma)} \frac{U^{(N)}(\eta)}{N^{\delta(\beta+1)}} \frac{\eta^N}{N!} \quad \text{as} \quad N\rightarrow\infty.
\end{equation} 
Wee see that for fixed $\eta$ this convergence is very fast: it goes as $(N^{\delta(\beta+1)}N!)^{-1}$ for $U^{(k)}$ uniformly bounded for $k\in\mathbb{N}$. For example take $U(\eta)=\exp(-\eta)$, then
\begin{equation}
	I^{\beta,\gamma}_\delta U(\eta) - e^{-\eta}\sum_{k=0}^{N-1} (-1)^k \lambda_k \frac{\eta^k}{k!}\sim \delta\frac{\Gamma\left(\delta(\beta+1)\right)}{\Gamma(\gamma)} \frac{e^{-\eta}}{N^{\delta(\beta+1)}} \frac{\eta^N}{N!} \quad \text{as} \quad N\rightarrow\infty.
\label{errest2}
\end{equation}
Notice also that for any fixed $N$ this error vanishes for $\eta\rightarrow 0$ or $\eta\rightarrow\infty$. As a side result, by the definition of the E-K operator (\ref{EK}) we obtain a representation for the generating function for the $\lambda_k$ series
\begin{equation}
	\sum_{k=0}^\infty \lambda_k \frac{\eta^k}{k!}=\frac{e^{-\eta}}{\Gamma(\gamma)}\int_0^1 (1-z)^{\gamma-1} z^\beta e^{\eta z^{1/\delta}} dz.
\end{equation}
The numerical illustration of these approximations is presented on Fig. \ref{RelErr}. For the function $e^{-\eta}$ we plot the relative error of approximating the $I^{\beta,\gamma}_\delta$ operator with the series (\ref{appr}), that is
\begin{equation}
	\text{Relative error}(N):=\frac{I^{\beta,\gamma}_\delta U(\eta) - \sum_{k=0}^{N-1} \lambda_k U^{(k)}(\eta)\frac{\eta^k}{k!}}{I^{\beta,\gamma}_\delta U(\eta)}.
	\label{RelErrDef}
\end{equation}
We can see that the error estimates (\ref{errest1}) and (\ref{errest2}) and the approximation are decently accurate even for a small number of approximating terms. The error grows with $\eta\rightarrow\infty$ but on compact sets stays bounded. 

\begin{figure}[htb!]
	\centering
	\includegraphics[scale=0.7]{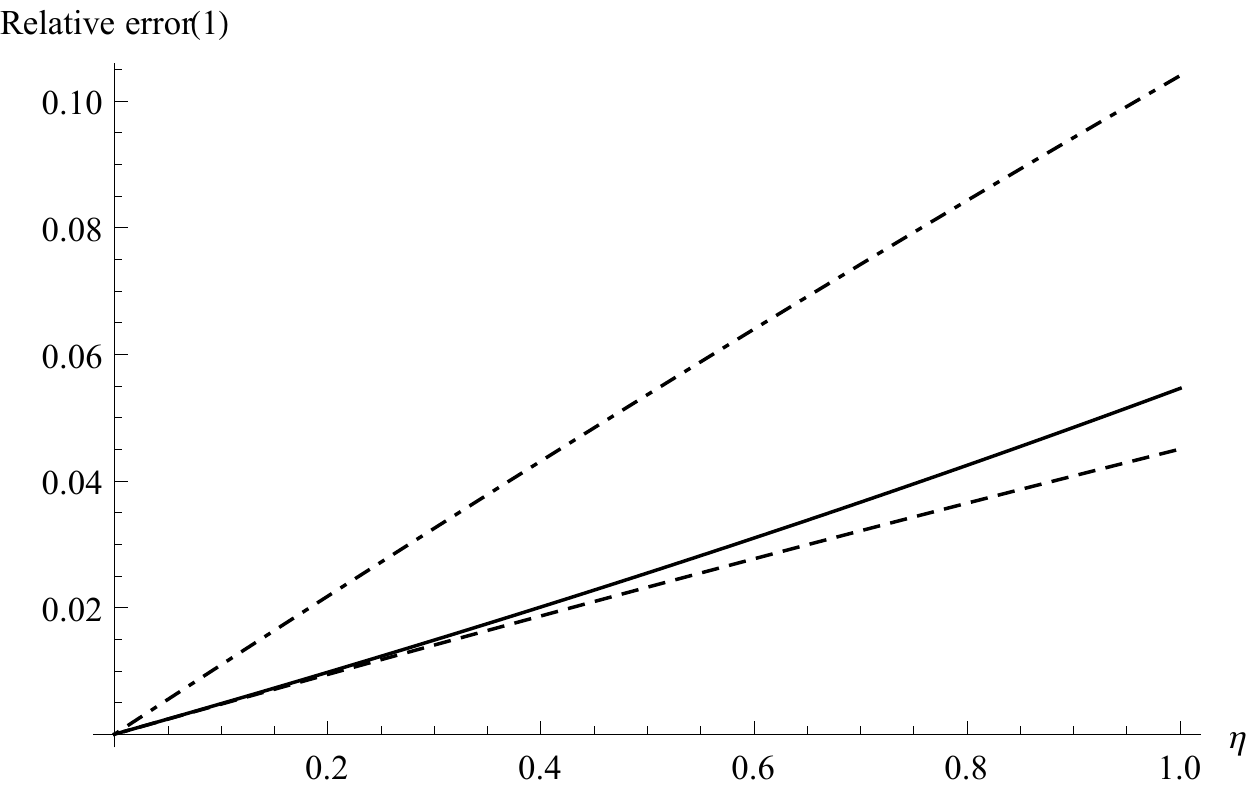}
	\includegraphics[scale=0.7]{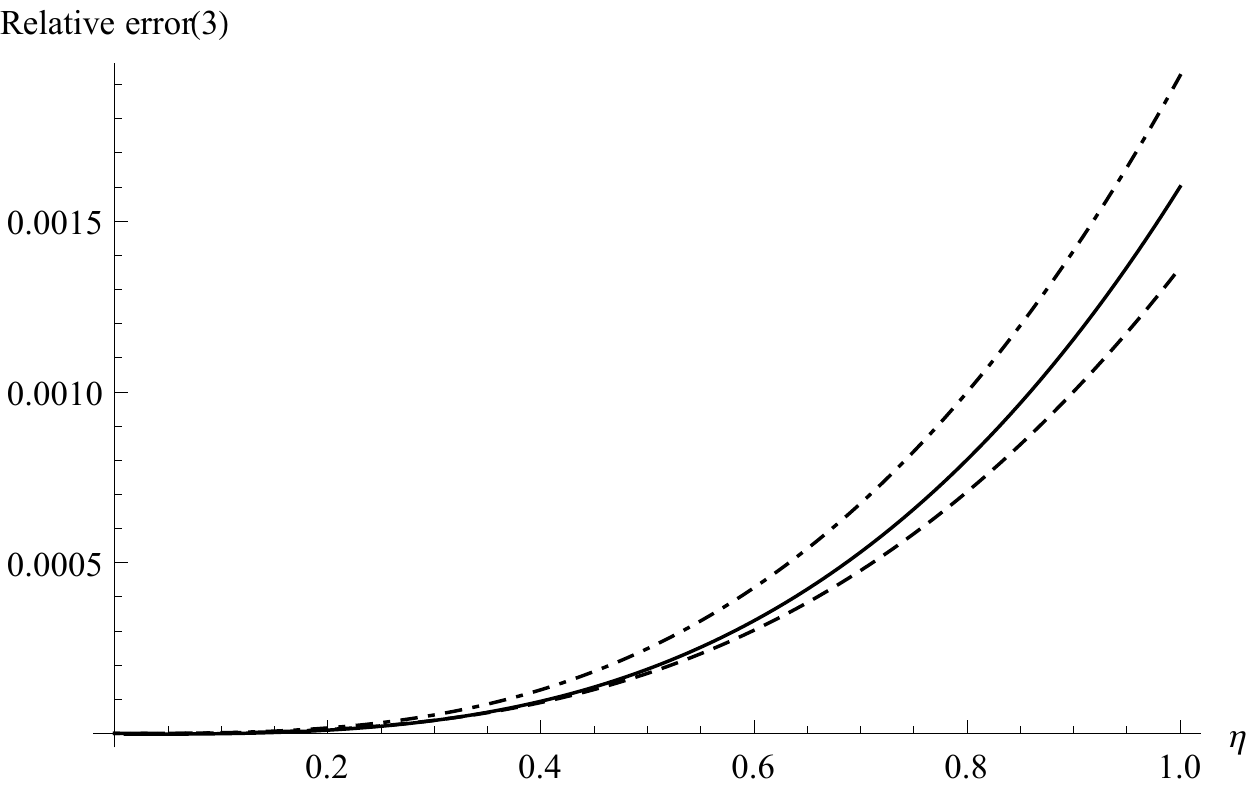}
	\caption{Relative errors of the approximation (\ref{errest1}) of $U(\eta)=e^{-\eta}$ for $N=1$ (on the left) and $N=3$ (on the right) calculated for $\beta=1$, $\gamma=0.1$ and $\delta=1$. Solid line: the relative error (\ref{RelErrDef}); dashed line: the first term on right-hand side of (\ref{errest1}); dot-dashed line: asymptotic approximation (\ref{errest2}).}
	\label{RelErr}
\end{figure}

The rigorous results concerning functions with uniformly bounded derivatives is contained in the proposition below.
\begin{prop}
	Let $U$ be a C$^N$ function with uniformly bounded derivatives. Then we have the following estimate
	\begin{equation}
	\left| I^{\beta,\gamma}_\delta U(\eta) - \sum_{k=0}^{N-1} \lambda_k U^{(k)}(\eta) \frac{\eta^k}{k!}\right|\leq C \frac{|\eta|^N}{N^{\delta(\beta+1)}N!},
	\end{equation}
	where $\lambda_k$ is as in (\ref{lamb}) and $C$ depends on $U$, $\beta$, $\gamma$ and $\delta$.
\end{prop}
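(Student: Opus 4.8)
The plan is to recognize the partial sum in the statement as the truncation of a Taylor expansion of the integrand in (\ref{EK}) and to control the remainder directly. Writing $\eta z^{1/\delta}=\eta-\eta(1-z^{1/\delta})$ and expanding $U$ about the point $\eta$ with the Lagrange form of the remainder of order $N$, which is available since $U\in C^N$, I would obtain
\begin{equation}
U(\eta z^{1/\delta})=\sum_{k=0}^{N-1}\frac{U^{(k)}(\eta)}{k!}(-1)^k\eta^k(1-z^{1/\delta})^k+\frac{U^{(N)}(\xi)}{N!}(-1)^N\eta^N(1-z^{1/\delta})^N,
\end{equation}
with $\xi=\xi(\eta,z)$ lying between $\eta z^{1/\delta}$ and $\eta$. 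Expanding about $\eta$ rather than the origin is the natural choice here because $\eta z^{1/\delta}\to\eta$ precisely as $z\to 1$, which is exactly where the weight $(1-z)^{\gamma-1}$ concentrates its mass.

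Next I would substitute this into the definition (\ref{EK}) and integrate term by term; the uniform boundedness of the derivatives legitimizes the interchange. Comparing with (\ref{lamb}), the $k$-th term of the finite sum integrates exactly to $\lambda_k U^{(k)}(\eta)\eta^k/k!$, so the difference in the statement equals the integrated remainder,
\begin{equation}
I^{\beta,\gamma}_\delta U(\eta)-\sum_{k=0}^{N-1}\lambda_k U^{(k)}(\eta)\frac{\eta^k}{k!}=\frac{(-1)^N\eta^N}{N!\,\Gamma(\gamma)}\int_0^1(1-z)^{\gamma-1}z^\beta U^{(N)}(\xi)(1-z^{1/\delta})^N\,dz.
\end{equation}
Taking absolute values and using $|U^{(N)}(\xi)|\le M$, where $M:=\sup_k\sup_\eta|U^{(k)}(\eta)|$, the $\xi$-dependence disappears and the remaining integral is exactly $\Gamma(\gamma)|\lambda_N|$ by (\ref{lamb}), giving
\begin{equation}
\left|I^{\beta,\gamma}_\delta U(\eta)-\sum_{k=0}^{N-1}\lambda_k U^{(k)}(\eta)\frac{\eta^k}{k!}\right|\le M\,|\lambda_N|\,\frac{|\eta|^N}{N!}.
\end{equation}

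Finally I would convert this into the claimed power-law factor using the asymptotics (\ref{lambas}). The point to make carefully, and the only real obstacle, is that (\ref{lambas}) is an asymptotic statement as $N\to\infty$, whereas the proposition demands a constant valid for every $N$. This is resolved by noting that the integrals defining $\lambda_N$ are finite for each $N$ and that $N^{\delta(\beta+1)}|\lambda_N|$ converges to the finite limit $\delta\,\Gamma(\delta(\beta+1))/\Gamma(\gamma)$; a convergent sequence is bounded, so $N^{\delta(\beta+1)}|\lambda_N|\le C_0$ for all $N$, with $C_0$ depending only on $\beta,\gamma,\delta$. Setting $C:=MC_0$ then yields $|\lambda_N|\le C_0\,N^{-\delta(\beta+1)}$ and hence the stated bound, with $C$ depending on $U$ through $M$ and on $\beta,\gamma,\delta$ through $C_0$. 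Everything else is the standard Taylor-with-remainder estimate, so the substance of the argument lies entirely in this uniformization of the $\lambda_N$ decay.
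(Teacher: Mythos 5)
Your proposal is correct and follows essentially the same route as the paper: a Taylor expansion with Lagrange remainder about the point $\eta$ (the paper phrases it as expanding $f_\eta(y)=U(\eta y)$ at $y=1$, which is the same computation), term-by-term identification of the coefficients with $\lambda_k$, the bound $\|U^{(N)}\|_\infty|\lambda_N|\,|\eta|^N/N!$, and finally the boundedness of $N^{\delta(\beta+1)}|\lambda_N|$ deduced from the asymptotics (\ref{lambas}). Your explicit remark that a convergent sequence is bounded is exactly the uniformization step the paper uses implicitly.
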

\begin{proof}
We expand the function $f_\eta(y):=U(\eta y)$ into the series at $y=1$ with $N$th remainder
\begin{equation}
	f_\eta(y)=\sum_{k=0}^{N-1} U^{(k)}(\eta) \frac{\eta^k}{k!} (y-1)^k+\frac{U^{(N)}(\eta \zeta)}{N!}\eta^N (y-1)^N,
\end{equation}
where $\zeta$ is between $1$ and $y$. By the substitution $z=y^\delta$ we have
\begin{equation}
	U\left(\eta z^{1/\delta}\right)=\sum_{k=0}^{N-1} (-1)^k U^{(k)}(\eta) \frac{\eta^k}{k!} \left(1-z^{1/\delta}\right)^k+(-1)^N\frac{U^{(N)}(\eta \zeta)}{N!}\eta^N \left(1-z^{1/\delta}\right)^N.
\end{equation}
Now, by the use of definition of E-K fractional operator (\ref{EK}) we obtain
\begin{equation}
	\left| I^{\beta,\gamma}_\delta U(\eta) - \sum_{k=0}^{N-1} \lambda_k U^{(k)}(\eta) \frac{|\eta|^k}{k!}\right|\leq \frac{\eta^N}{N!}\frac{1}{\Gamma(\gamma)}\int_0^1(1-z)^{\gamma-1}z^\beta\left(1-z^{1/\delta}\right)^N \left|U^{(N)}(\eta \zeta)\right|dz.
\end{equation}
Since, by the assumption, the derivatives of $U$ are uniformly bounded we have 
\begin{equation}
	\left| I^{\beta,\gamma}_\delta U(\eta) - \sum_{k=0}^{N-1} \lambda_k U^{(k)}(\eta) \frac{\eta^k}{k!}\right|\leq \left\|U^{(N)}\right\|_\infty |\lambda_N| \frac{|\eta|^N}{N!}.
\end{equation}
Now, due to the asymptotic relation for $\lambda_k$ (\ref{lambas}) the sequence $|\lambda_k|\; k^{\delta(\beta+1)}$ is bounded, thus
\begin{equation}
	\left| I^{\beta,\gamma}_\delta U(\eta) - \sum_{k=0}^{N-1} \lambda_k U^{(k)}(\eta) \frac{\eta^k}{k!}\right|\leq \left\|U^{(N)}\right\|_\infty |\lambda_N|\; N^{\delta(\beta+1)}  \frac{|\eta|^N}{N^{\delta(\beta+1)}N!}\leq C \frac{|\eta|^N}{N^{\delta(\beta+1)}N!}.
\end{equation}
This concludes the proof.
\end{proof}
Notice that in the above proposition we did not assume that $U$ is analytic. This means that we can investigate the convergence of the series in a compact subset of the real line for functions which have isolated points at which they do not possess a derivative. In what follows we will show that the solution of (\ref{mainEq}) with (\ref{conds1})-(\ref{conds2}) can be approximated by a function $U(\eta)=(1-\eta/\eta^*)^{1/m}$, where $\eta^*$ depends on $\alpha$. This function is not analytic and even not differentiable at $\eta=\eta^*$. Specifically, $U^{(k)}(\eta)=\frac{1}{m}\left(\frac{1}{m}-1\right)...\left(\frac{1}{m}-k+1\right) (\eta^*)^{-k} (1-\eta/\eta^*)^{1/m-k}$. Moreover
\begin{equation}
	\frac{1}{m}\left(\frac{1}{m}-1\right)...\left(\frac{1}{m}-k+1\right) = (-1)^k \frac{\Gamma(k-\frac{1}{m})}{\Gamma(1-\frac{1}{m})}\sim (-1)^k \frac{(k-1)!}{k^\frac{1}{m}\Gamma(1-\frac{1}{m})} \quad \text{as} \quad k\rightarrow\infty.
\end{equation}
And thus we have an asymptotic relation
\begin{equation}
	I^{\beta,\gamma}_\delta U(\eta) - \sum_{k=0}^{N-1} \lambda_k U^{(k)}(\eta) \frac{\eta^k}{k!}\sim \delta\frac{\Gamma\left(\delta(\beta+1)\right)}{\Gamma(\gamma)} \frac{1}{N^{1+\delta(\beta+1)+1/m}}\left(\frac{\eta}{\eta^*}\right)^N \left(1-\frac{\eta}{\eta^*}\right)^{1/m-N} \quad \text{as} \quad N\rightarrow\infty.
	\label{asymPierw}
\end{equation}
For $0\leq\eta\leq M<\eta^*$ the right-hand side of (\ref{asymPierw}) is uniformly bounded with respect to $\eta$. We see that the convergence is good, especially for $\eta$ close to $0$. 

An important remark is in order. Notice that all of our preceding considerations assume $\delta>0$ while in the equation (\ref{mainEq}) $\delta=-1/b<0$. Despite the fact that in the case of negative $\delta$ we could have not performed the integrations, the resulting $\lambda_k$ would still have sense by the analytic continuation of the Gamma function. This means that all the asymptotic formulas, such as (\ref{lambas}), would still \emph{formally} make sense for $\delta<0$. But now the convergence would be slower as $\delta$ became more negative. Taking more terms in the series (\ref{appr}) would not necessarily yield a greater accuracy. We note also that the integral (\ref{EK}) defining the E-K operator has a meaning despite the negativity of $\delta$. Because $U$ has a compact support, for sufficiently small $z$ the expression $\eta z^{1/\delta}$ is larger than $\eta^*$ and thus $U(\eta z^{1/\delta})$ vanish. Therefore, the lower integration limit is separated from $0$ (it is actually equal to $(\eta/\eta^*)^{-\delta}$) In order to approximate the E-K operator, the sensible move would be to take the $\lambda_0$ term only since it not depends on $\delta$. Moreover, as was noted before, this is also motivated by the integrand's mass concentration near a singular point. It is also easy to provide a quick estimate for the one-term approximation, which does not require any differentiability.
\begin{equation}
\begin{split}
	\left| I^{\beta,\gamma}_\delta U(\eta) - \frac{\Gamma(\beta+1)}{\Gamma(\beta+\gamma+1)} U(\eta)\right| & \leq \frac{1}{\Gamma(b)}\int_0^1 (1-z)^{\gamma-1} z^\beta \left|U(\eta z^{1/\delta})-U(\eta)\right|dz \\ 
	& \leq \frac{\Gamma(\beta+1)}{\Gamma(\beta+\gamma+1)} \sup_{z\in[0,1]} \left|U(\eta z^{1/\delta})-U(\eta)\right|,
\end{split}
\end{equation}
where we have written down the expression for $\lambda_0$ explicitly. We can see that this error strongly depends on the nature of $U$. The numerical illustration of the approximation with the $\lambda_0$ and $\lambda_1$ terms only (that is $N=1$ and $N=2$ in (\ref{errest1})) is presented on Fig. \ref{AbsErr}. The function investigated here is $U(\eta)=\sqrt{1-\eta}$ which, as will turn out in the next section, is a particular case of the approximate solution of the equation (\ref{mainEq}). We can see that because of the fact that $U'$ has a singularity at $\eta=1$, only the $\lambda_0$ term provides an accurate global approximation. Moreover, as can be seen, higher order approximations are reasonable only for small values of the argument - they diverge for $\eta$ approaching the singularity of the derivative. The choice of parameters $\beta$, $\gamma$ and $\delta$ in $I^{\beta,\gamma}_\delta$ is chosen to be the same as in (\ref{mainEq}) with (\ref{conds1}) to anticipate further results. 

\begin{figure}[htb!]
	\centering
	\includegraphics[scale=0.7]{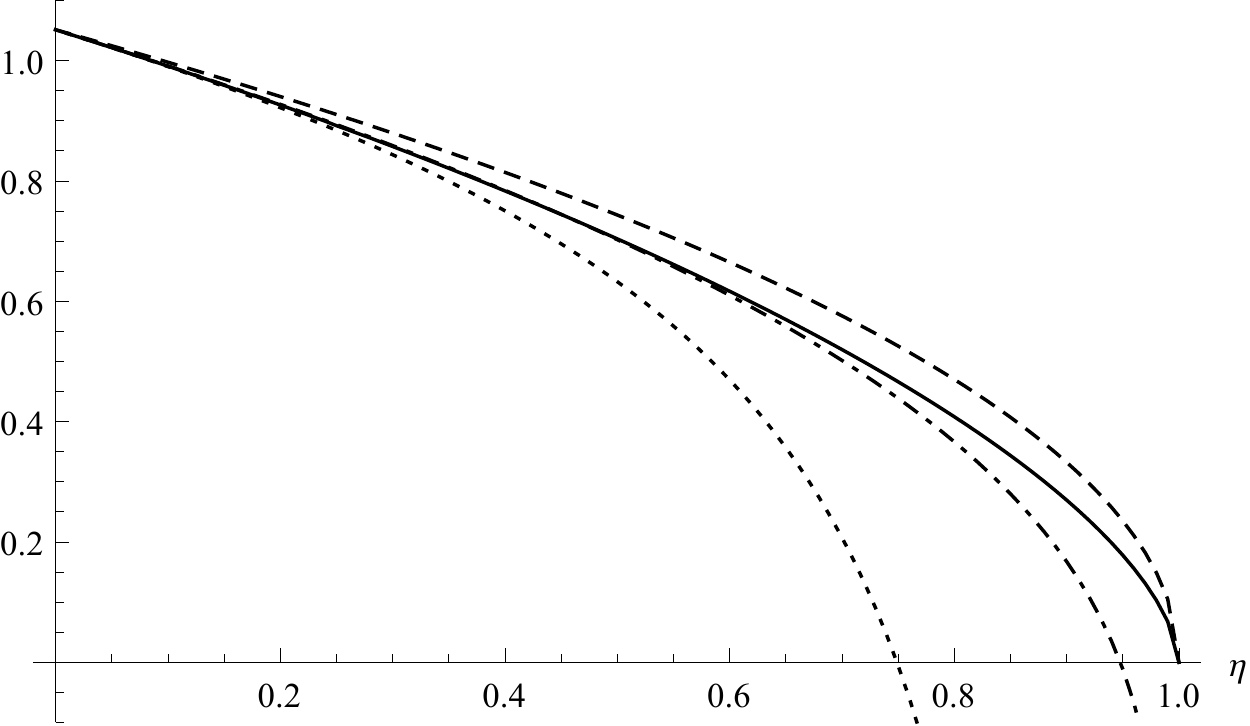}
	\includegraphics[scale=0.7]{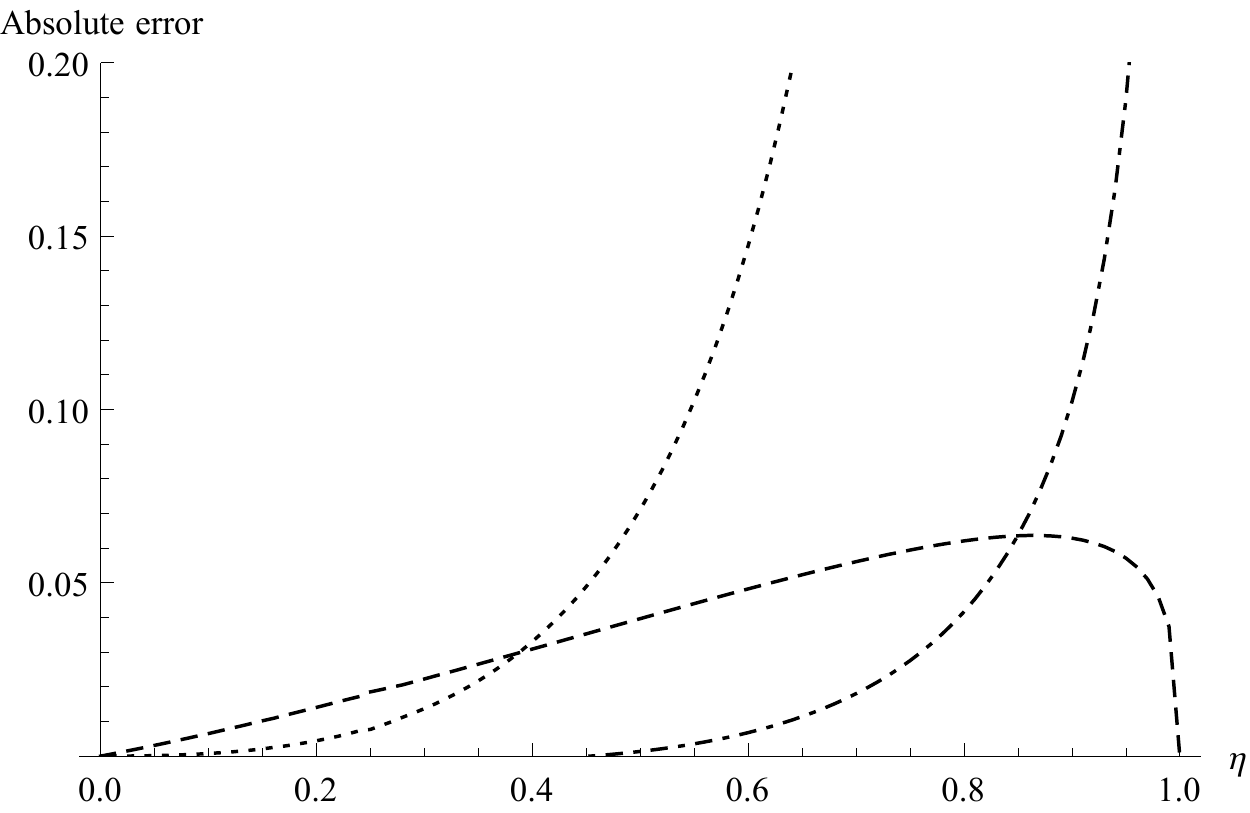}
	\caption{Comparison between the $I^{0,1-\alpha}_{-2/\alpha} U(\eta)$ for $U(\eta)=\sqrt{1-\eta}$ and its approximations (\ref{appr}) for $\alpha=0.9$. On the left: resulting function $I^{0,1-\alpha}_{-2/\alpha} U(\eta)$ (solid line) along with its approximations $\sum_{k=0}^{N-1} \lambda_k U^{(k)}(\eta)$ for $N=1$ (dashed line), $N=2$ (dot-dashed line) and $N=3$ (dotted line). On the right: absolute error $\left|I^{0,1-\alpha}_{-2/\alpha} U(\eta) - \sum_{k=0}^{N-1} \lambda_k U^{(k)}(\eta)\frac{\eta^k}{k!}\right|$ for $N=1$ (dashed line), $N=2$ (dot-dashed line) and $N=3$ (dotted line). }
	\label{AbsErr}
\end{figure}

\subsection{Solutions}
Having all the previous remarks in mind we proceed to the analysis of the main equation. We can see that for any case $0<\alpha\leq 1$ the equation (\ref{mainEq}) is of second-order. If we make the approximation $I^{\beta,\gamma}_\delta U \approx \lambda_0 U$ then any case of the equation (\ref{mainEq}) can be written in the form
\begin{equation}
	(U^m U')'\approx A U - B \eta U',
	\label{mainEqA}
\end{equation}
where the coefficients $A,B,C$ depend on $\lambda_0$, $\alpha$ and boundary conditions ((\ref{conds1}) or (\ref{conds2})). Generally $A,B$ are functions of $\alpha$ and $m$, what is summarized in the Tab. \ref{ABC1}. Equation (\ref{mainEqA}) is well known in the literature. Its mathematical theory was extensively developed in \cite{Gil76}.

\begin{table}[htb!]
\centering
\begin{tabular}{c|cc}
	&  A & B \\ \hline \\
Conditions (\ref{conds1}) & $\frac{1}{\Gamma(1-\alpha)},$ & $\frac{\alpha}{2\Gamma(2-\alpha)}$; \\ \\
Conditions (\ref{conds2}) &  $\frac{\Gamma\left(1+\frac{\alpha}{2+m}\right)}{\Gamma\left(1-\alpha+\frac{\alpha}{2+m}\right)},$ & $\alpha\;\frac{m+1}{m+2}\frac{\Gamma\left(1+\frac{\alpha}{2+m}\right)}{\Gamma\left(2-\alpha+\frac{\alpha}{2+m}\right)}$;\\ \\
\end{tabular}
\caption{Coefficients of the equation (\ref{mainEqA}).}
\label{ABC1}
\end{table}

%

The problem (\ref{mainEq}) with conditions (\ref{conds1}) was solved in \cite{Plo14}. In this work we present a more general analysis of this and the remaining problem (\ref{conds2}). We are aiming to obtain only an approximate solution of the equation (\ref{mainEq}) since it cannot be integrated (even numerically) in a straightforward way. This is due to the fact that we do not know $\eta^*$, which defines the free-boundary in (\ref{conds1})-(\ref{conds2}). This difficulty can be circumvented by a particular transformation introduced in \cite{Kin88} (but see also \cite{Lis80}) during an investigation of a similar equation describing the classical porous medium equation ($\alpha=1$). It turns out that when we substitute
\begin{equation}
	U(\eta)=\left(m (\eta^*)^2 y(z)\right)^\frac{1}{m}, \quad z=1-\frac{\eta}{\eta^*},
	\label{subs2}
\end{equation}
the free boundary problem can be transformed into an initial-value one. 

We make the final (higher-order) simplification and make the substitution (\ref{subs2}). We immediately obtain 
\begin{equation}
	U'=- (\eta^*)^{\frac{2}{m}-1} m^{\frac{1}{m}-1} y^{\frac{1}{m}-1} y', \quad (U^m U')'=(m (\eta^*)^2)^{\frac{1}{m}}\left(\frac{1}{m}y^{\frac{1}{m}-1}y'+y^{\frac{1}{m}}y''\right).
\label{sub}
\end{equation}
Hence the equation (\ref{mainEqA}) can be transformed into
\begin{equation}
	\frac{1}{m}y'^2+y y''=A y+\frac{B}{m}\left(1-z\right)y', \quad y(0)=0, \quad y'(0)=B,
\label{mainEqAA}
\end{equation}
where the initial condition $y(0)=0$ was determined from the requirement that $U(\eta^*)=0$. It is also interesting that the condition $y'(0)=B$ is \emph{necessary}: it emerges from the structure of the equation (\ref{mainEqAA}) by setting $z=0$. It is worthy to stress the fact that the problem (\ref{mainEqAA}) is the \emph{same} for both sets of conditions (\ref{conds1})-(\ref{conds2}). The difference between those two cases will emerge when we calculate the wetting front position $eta^*$. 

Assuming the Taylor expansion
\begin{equation}
	y(z)=\sum_{k=1}^\infty a_k z^k, \quad a_k = \frac{y^{(k)}(0)}{k!},
\label{Taylor}
\end{equation}
equation (\ref{mainEqAA}) can be solved by a comparison of terms. Equivalently, but more computationally convenient, is to take the $n$th derivative of the both sides of (\ref{mainEqAA}) and use the Leibniz rule for derivative of the product. We then obtain
\begin{equation}
	\sum_{k=0}^n \binom{n}{k} \left(\frac{1}{m} y^{(k+1)}(z)y^{(n-k+1)}(z)+ y^{(k)}(z)y^{(n-k+2)}(z)\right)=A y^{(n)}(z)+\frac{B}{m}\sum_{k=0}^n \binom{n}{k} y^{(n-k+1)}(z)\frac{d^k}{dz^k}\left(1-z\right).
\end{equation}
Now, setting $z=0$ we have the recurrence relation for $a_n$
\begin{equation}
\begin{split}
	\sum_{k=0}^n \binom{n}{k} & \left(\frac{1}{m} (k+1)!(n-k+1)! \; a_{k+1}a_{n-k+1} + k! (n-k+2)! \; a_k a_{n-k+2}\right) \\ 
	&=A n! \; a_n+\frac{B}{m}\left((n+1)! \; a_{n+1}-n n! \; a_n\right),
\end{split}
\end{equation}
or, after simplification
\begin{equation}
\begin{split}
	&a_0=0, \quad a_1=B, \quad a_{n+1}=\frac{1}{B n (n+1)} \left[\left(A-\frac{Bn}{m}-\frac{2n}{m}a_2\right)a_n \right. \\
	&\left. -\sum_{k=2}^n\frac{1}{m} (k+1)(n-k+1)a_{k+1}a_{n-k+1}+(n-k+1)(n-k+2)a_k a_{n-k+2}\right], \quad n\geq 1
\end{split}
\label{recurr}
\end{equation}
which can be easily implemented in some scientific environment (here we used the convention that $\sum_{k=2}^1 = 0$). It is straightforward to calculate as much coefficients $a_n$ as necessary, for example
\begin{equation}
	a_2=\frac{Am-B}{2(1+m)}, \quad a_3=\frac{(A + B) m (B - A m)}{6 B(1 + m)^2 (1 + 2  m)}, \quad a_4=\frac{(A + B) m (B - A m) (B (3 + m) - A m (5 + 3 m))}{24 B^2 (1 + 
   m)^3 (1 + m (5 + 6 m))}.
\end{equation}

The Taylor series expansion (\ref{Taylor}) is often useful for $m$ of order of unity. For larger values we can apply the perturbation theory. Assuming that the solution $y$ of the equation (\ref{mainEqAA}) can be expressed in the form
\begin{equation}
	y(z)=y_0(z)+\frac{1}{m}y_1(z)+\cdots,
\label{pert}
\end{equation}
we obtain the following equations for $y_0$ and $y_1$
\begin{equation}
\begin{split}
	y_0 y_0''=A y_0, & \quad y_0(0)=0, \quad y'_0(0)=B, \\
	y_0'^2+y_0y_1''+y_1 y_0''=Ay_1+B(1-z)y_0', & \quad y_1(0)=0, \quad y_1'(0)=0.
\end{split}
\end{equation}
Immediately we obtain $y_0(z)=A z^2/2+Bz$ and therefore
\begin{equation}
	y_1(z)=-2(A+B)\left[\frac{z^2}{2}-\frac{B}{A}\left(\left(z+\frac{2B}{A}\right)\left(\ln\left(1+\frac{A}{2 B}z\right)-1\right)+\frac{2B}{A}\right)\right].
\end{equation}

The substitution (\ref{sub}) gives also a very convenient way of determining the wetting front position $\eta^*$. Here we must distinguish between the cases (\ref{conds1}) and (\ref{conds2}). For the former we have $1=U(0)=(m (\eta^*)^2 y(1))^{1/m}$ and thus
\begin{equation}
	\eta^*=\frac{1}{\sqrt{m y(1)}},
\label{eta1}
\end{equation}
For the case of (\ref{conds2}) we $-1=U^m(0)U'(0)=-m^{-1/m}(\eta^*)^{1-2/m}y^{1/m}(1)y'(1)$, hence
\begin{equation}
	\eta^* = \left(m^\frac{1}{m} y^\frac{1}{m}(1)y'(1)\right)^{-\frac{m}{m+2}}.
\label{eta2}
\end{equation}
Since in order to calculate $\eta^*$ we only need to know the values of $y$ and $y'$ at $z=1$, the series representation (\ref{Taylor}) can give very accurate approximations. When $m$ is large the perturbation approximation (\ref{pert}) yields better results. We note also that the formulas for the wetting front position $\eta^*$ (\ref{eta1})-(\ref{eta2}) are the only differences between two kinds of boundary conditions. The self-similar solution $U$ behaves in the same way for both of these setting. The difference appears only in the scaling and choice of $a$ and $b$ in (\ref{scale}). 

We can now go back to the substitution (\ref{subs2}) and write 
\begin{equation}
\begin{split}
	U(\eta)&=\left(m (\eta^*)^2 \sum_{k=0}^\infty a_k \left(1-\frac{\eta}{\eta^*}\right)^k\right)^\frac{1}{m}=\left(m (\eta^*)^2 \left(1-\frac{\eta}{\eta^*}\right) \sum_{k=0}^\infty a_{k+1} \sum_{j=0}^k \binom{k}{j} (-1)^j \left(\frac{\eta}{\eta^*}\right)^j\right)^\frac{1}{m} \\
	&=\left(m (\eta^*)^2 \left(1-\frac{\eta}{\eta^*}\right) \sum_{j=0}^\infty \left(\sum_{k=0}^j \binom{k}{j} a_{k+1} \right) \left(-\frac{\eta}{\eta^*}\right)^j\right)^\frac{1}{m},
\end{split}
\end{equation}
where $a_{k}$ can be calculated from the recurrence (\ref{recurr}) and $\eta^*$ is obtained for each of the boundary conditions (\ref{conds1})-(\ref{conds2}) from (\ref{eta1})-(\ref{eta2}) respectively. Although complicated, these formulas are readily computable in any scientific environment. It is also worth to mention the form of the derivative at $\eta=0$
\begin{equation}
	U'(0)=-(\eta^*)^{\frac{2}{m}-1}\left(m\sum_{k=1}^\infty a_k\right)^{\frac{1}{m}-1}\sum_{k=1}^\infty k a_k.
\label{U'0}
\end{equation}
Combining the above formula along with the value of $U$ taken from (\ref{conds1}) or (\ref{conds2}) we obtain an initial-value problem for $U$. That is, along with (\ref{U'0}) we have
\begin{equation}
	U(0)=1 \; (\text{for (\ref{conds1}})) \quad \text{or} \quad U(0)=\left((\eta^*)^{\frac{2}{m}-1}\left(m\sum_{k=1}^\infty a_k\right)^{\frac{1}{m}-1}\sum_{k=1}^\infty k a_k\right)^{-\frac{1}{m}} \; (\text{for (\ref{conds2}})) 
\end{equation}
The numerical solution of this problem is much simpler than the one with the unknown free boundary $\eta^*$. By truncating the series (\ref{Taylor}) we obtain the following approximations
\begin{equation}
\begin{split}
	U_1(\eta)&=\left(m (\eta_1^*)^2 a_1 \left(1-\frac{\eta}{\eta_1^*}\right) \right)^\frac{1}{m}, \quad U_2(\eta)=\left(m (\eta_2^*)^2 \left(1-\frac{\eta}{\eta_2^*}\right) \left(a_1+a_2-a_2\frac{\eta}{\eta_2^*}\right)\right)^\frac{1}{m}, \\
	U_3(\eta)&=\left(m (\eta_3^*)^2 \left(1-\frac{\eta}{\eta_3^*}\right) \left(a_1+a_2+a_3-(a_2+2a_3)\frac{\eta}{\eta_3^*}+a_3 \left(\frac{\eta}{\eta_3^*}\right)^2\right)\right)^\frac{1}{m}
\end{split}
\label{Ui}
\end{equation}
where by the subscript we denoted how many terms to take in (\ref{Taylor}) and (\ref{eta1})-(\ref{eta2}). These approximations can be used to determine the closed form of the \emph{cumulative moisture}
\begin{equation}
	I(t):=\int_0^\infty u(x,t) dx = \int_0^\infty t^a U\left(x t^{-b}\right)dx = t^{a+b} \int_0^{\eta^*} U(\eta)d\eta,
	\label{Infil}
\end{equation}
where $a$ and $b$ are chosen with respect to the considered boundary conditions (\ref{conds1})-(\ref{conds2}). By the use of (\ref{Ui}) we have
\begin{equation}
	I_1(t)= t^{a+b} \frac{m \eta_1^*}{1+m}\left(m (\eta_1^*)^2 a_1\right)^\frac{1}{m}, \; I_2(t)=t^{a+b}  \frac{m \eta_2^*}{1+m}\left(m (\eta_2^*)^2 a_1\right)^\frac{1}{m} \; _2 F_1\left(1+\frac{1}{m},-\frac{1}{m}; 2+\frac{1}{m}; - \frac{a_2}{a_1}\right),
	\label{Infil2}
\end{equation}
and in general
\begin{equation}
	I(t)=t^{a+b} \eta^* \left(m (\eta^*)^2 \right)^\frac{1}{m} \int_0^1 \left(\sum_{k=0}^\infty a_k y^k\right)^\frac{1}{m} dy.
\end{equation}
Here $_2 F_1$ denotes the hypergeometric function (see \cite{AS}). 
 
To illustrate the accuracy of our approximations we implement the finite difference numerical scheme proposed in \cite{Plo14} to solve the time-fractional porous medium equation (\ref{ERL2}) with conditions (\ref{cond4}) (the case of (\ref{cond3}) was investigated in the cited paper). This finite difference scheme is constructed as a weighted explicit-implicit method, where nonlinearity is linearized by the method borrowed from \cite{Ozi}. The simulated values of the diffusion front for (\ref{cond4}) are depicted on Fig. \ref{diff}. 

\begin{figure}[htb!]
	\centering
	\includegraphics[trim=300 280 300 280,scale=1]{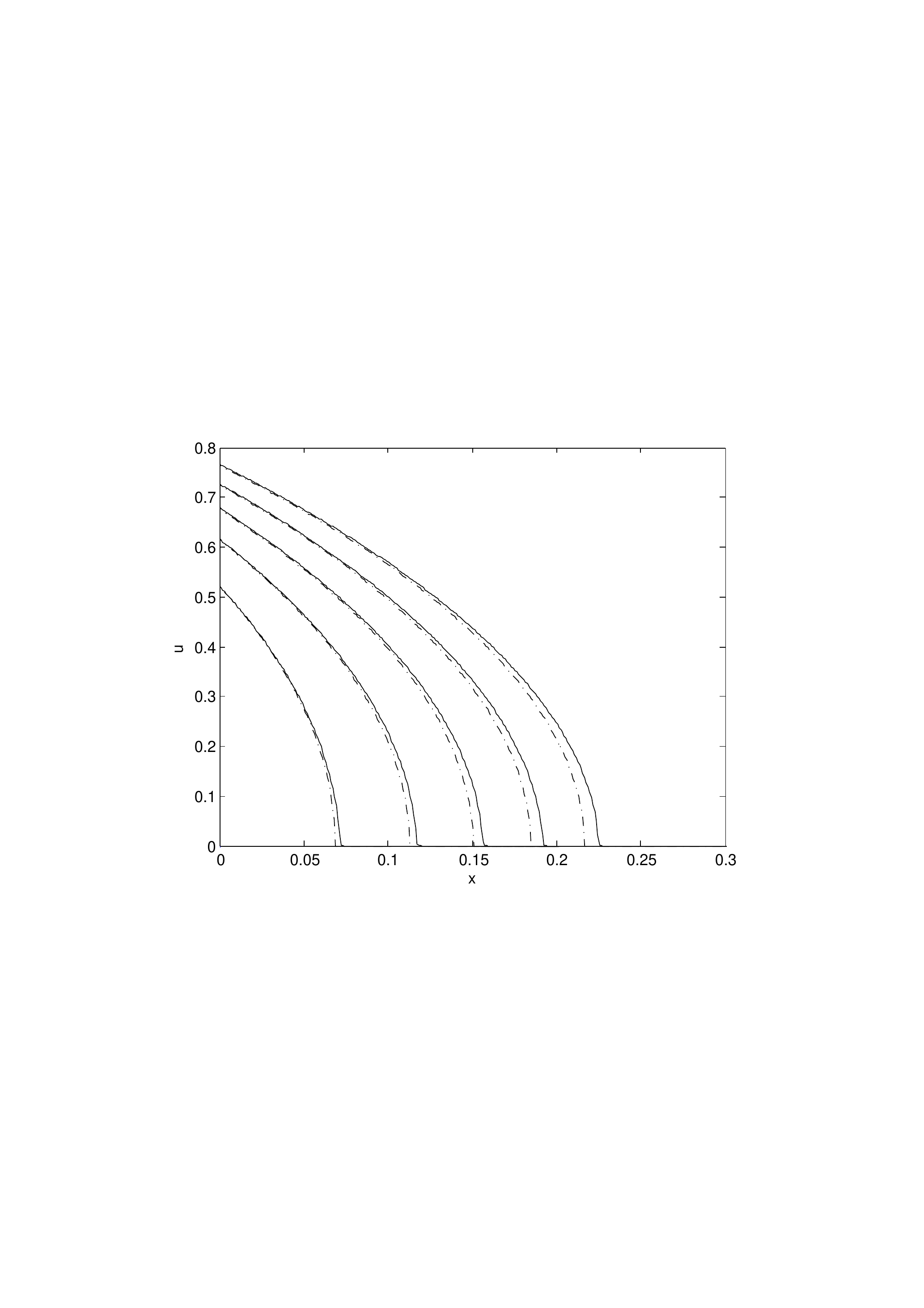}
	\caption{The solution of the anomalous diffusion equation (\ref{ERL2}) with conditions (\ref{cond4}) for $\alpha=0.95$, $m=2$ and times $t=0.02$, $0.04$, $0.06$, $0.08$, $0.1$ (curves from bottom to top). The dashed line represents the approximate solution $U_3$ from (\ref{Ui}).}
	\label{diff}
\end{figure}

As a additional check of the accuracy of the approximation we compute the wetting front position $\eta^*=\eta^*(t)$ and total infiltration (\ref{Infil}). The comparison between the approximate and exact (numerical) solution is presented of Fig. \ref{wetting}. We can see that the accuracy of approximation is decent, especially for small $\eta$ (what was suggested before).  

\begin{figure}[htb!]
	\centering
	\includegraphics[trim=130 280 110 280,scale=0.7]{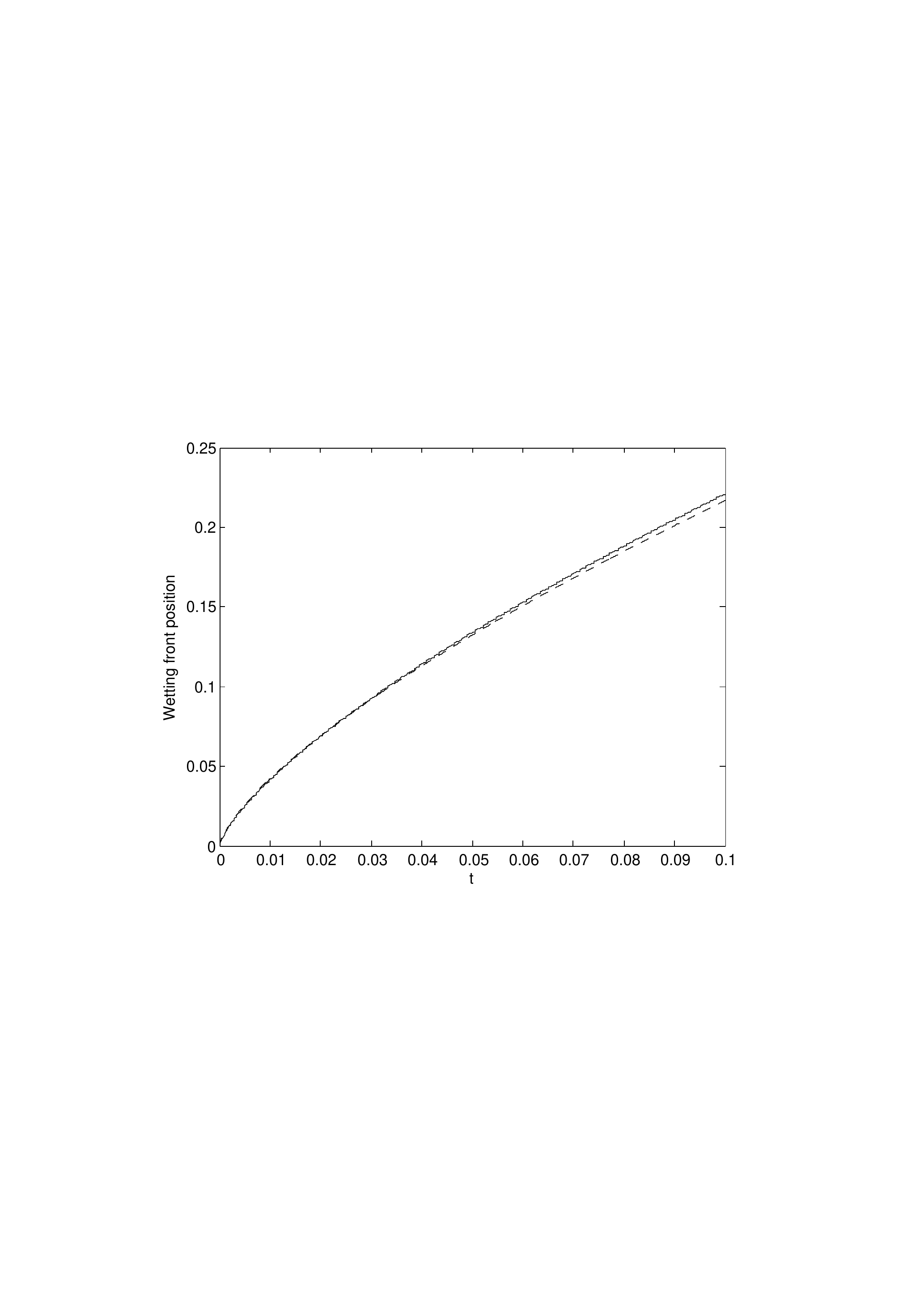}
	\includegraphics[trim=110 280 130 280,scale=0.7]{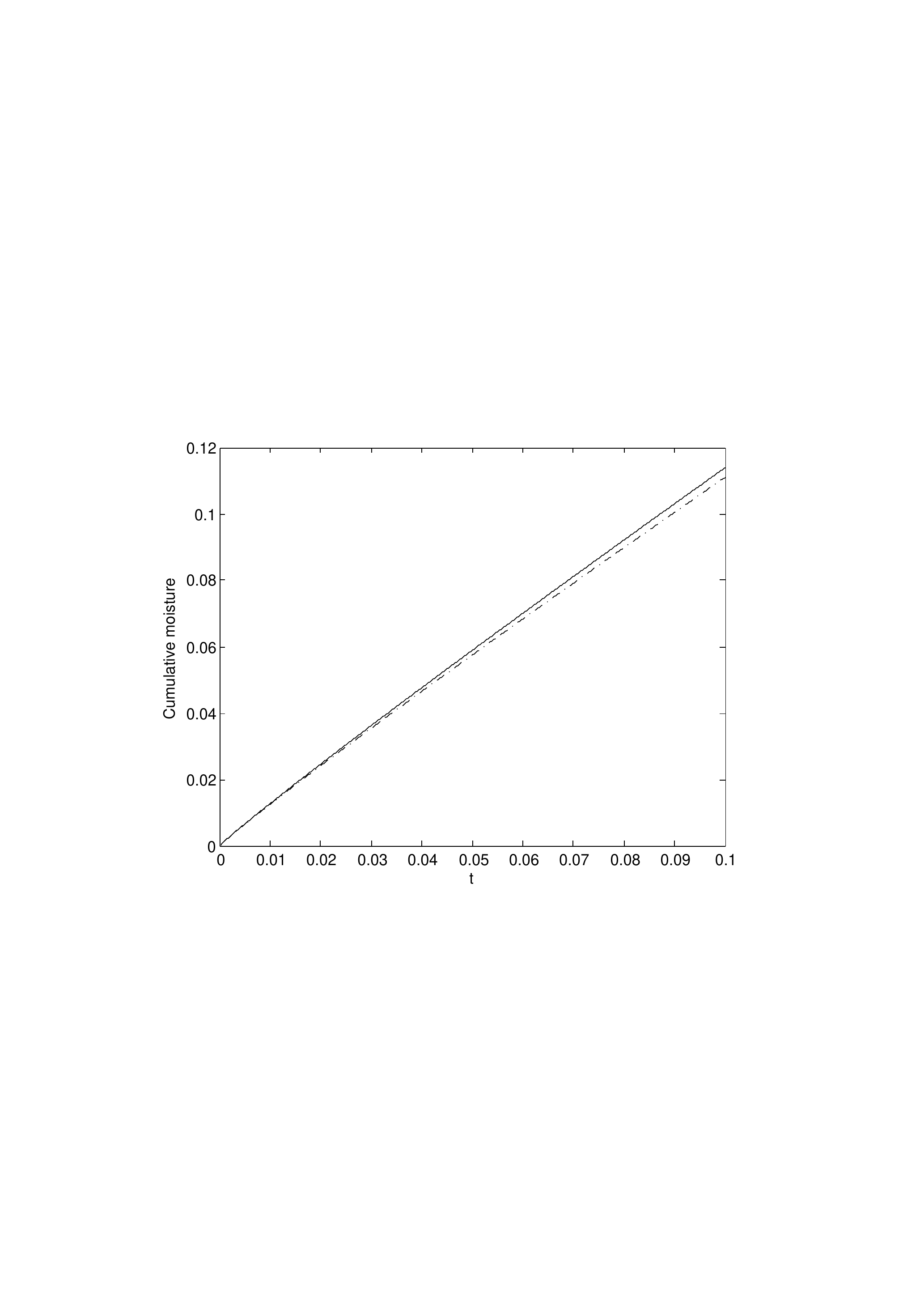}
	\caption{On the left: wetting front position $\eta^*=\eta^*(t)$; on the right: total cumulative infiltration $I=I(t)$ as in (\ref{Infil}). Solid lines represent the calculated numerical values while dashed lines are the approximations (\ref{eta2}) (for a three-term approximation) and (\ref{Infil2}) (for a two-term approximation) respectively. Here $\alpha=0.95$ and $m=2$.}
	\label{wetting}
\end{figure}

Finally, let us focus on the real data obtained from the experiment \cite{Aze06}. In that paper Authors reported a deviation from the standard Boltzmann scaling $x/\sqrt{t}$ when investigated a moisture ingress in porous zeolite. To model this phenomenon they used equation (\ref{ERL}) but instead of solving it, they approximated the diffusion coefficient $D=D(u)$. Moreover, they indicated that the moisture transport process in the porous medium is very dependent on the history. Hence, the idea of the model with time-fractional derivative of order $\alpha$ which is a nonlocal-in-time operator. The fitting results of the dimensional form (\ref{scaling1}) of the $U_3$ function (\ref{Ui}) with the boundary conditions (\ref{conds1}) are depicted on Fig. \ref{zeo}. The subdiffusive character of the medium is evident and the accuracy of our approximation is decent. 

\begin{figure}[htb!]
	\centering
	\includegraphics[scale=1]{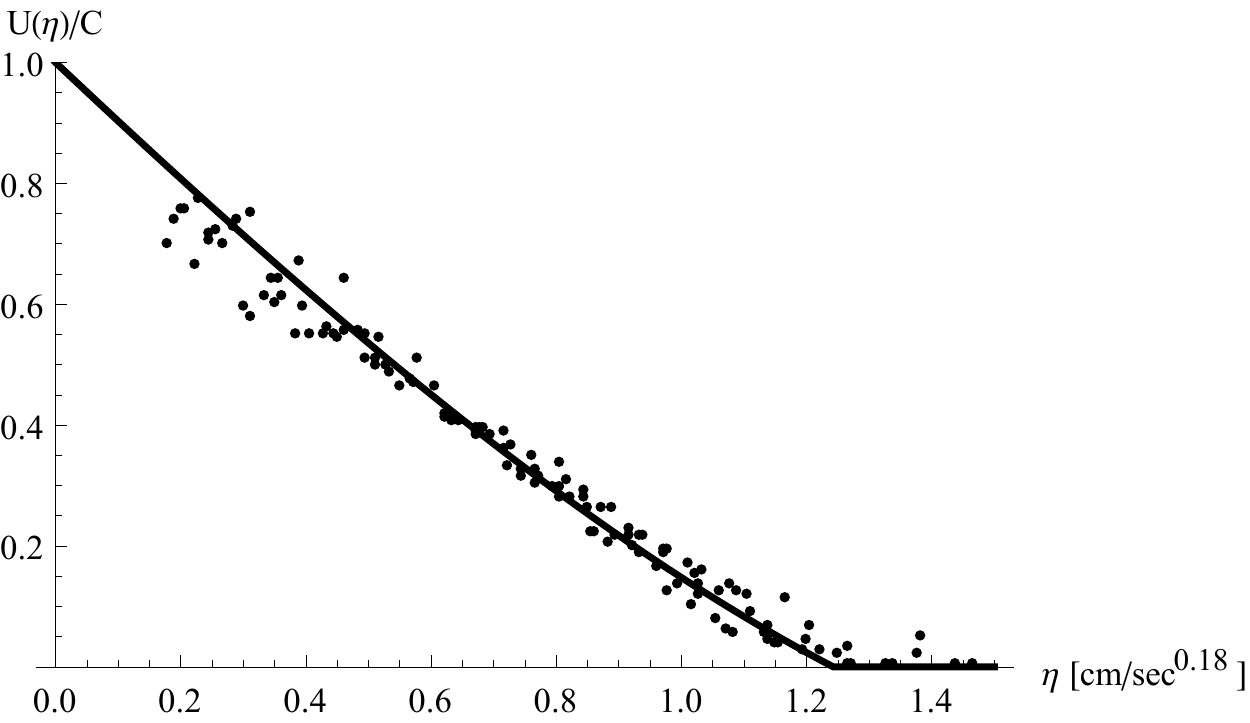}
	\caption{Fitting the function $U_3$ from (\ref{Ui}) to the self-similar moisture profile of the porous zeolite obtained from \cite{Aze06}. Here $\alpha=0.36$, $m=1$ and $D_0=0.4568$ [cm$^2$/sec$^{0.36}$]}
	\label{zeo}
\end{figure}

\section{Conclusion}
We have analyzed the so-called time-fractional porous medium equation, which is a mathematical description of the anomalous transport in some types of media (such as building materials). We have shown that the emergence of the temporal fractional derivative is a natural consequence of the trapping phenomena inside the medium, which lead to the subdiffusion. Our deterministic derivation is dual to the one done in the stochastic CTRW framework and thus shows a deep relation between these two approaches. In our setting, the nonlinear flux can be easily incorporated yielding the time-fractional Richards equation. Due to serious complexity of the resulting equation we have used the previously developed method of approximation and obtained decently accurate (approximate) solutions. All of the approximations have been verified numerically along with fitting to the real data representing wetting front position in the porous zeolite. Our results tentatively suggest that the time-fractional porous medium equation can be a sensible model for the anomalous diffusion in some porous media. But nevertheless, the topic has to be thoroughly investigated in order to state some concluding remarks. We hope that our research will help to provide a deeper understanding of the anomalous diffusion. 

\section*{Acknowledgment}
Author was co-financed by European Union within European Social Fund.

\end{document}